\documentclass[11pt,reqno]{amsart}
\usepackage[colorlinks=true,allcolors=blue,backref=page]{hyperref}
\usepackage{color}
\usepackage{booktabs}
\usepackage{longtable}
\usepackage{array}
\usepackage{amsmath, amssymb, amsthm}

\usepackage{mathrsfs}
\usepackage{mathtools}
\usepackage[noabbrev,capitalize,nameinlink]{cleveref}
\crefname{equation}{}{}
\usepackage{fullpage}
\usepackage[noadjust]{cite}
\usepackage{graphics}
\usepackage{pifont}
\usepackage{tikz}
\usepackage{tikz-cd}
\usepackage{bbm}
\usepackage[T1]{fontenc}

\usetikzlibrary{arrows.meta}

\usepackage{environ}
\usepackage{framed}
\usepackage{url}
\usepackage[linesnumbered,ruled,vlined]{algorithm2e}
\usepackage[noend]{algpseudocode}
\usepackage[labelfont=bf]{caption}
\usepackage{cite}
\usepackage{framed}
\usepackage[framemethod=tikz]{mdframed}
\usepackage{appendix}
\usepackage{graphicx}
\usepackage[textsize=tiny]{todonotes}
\usepackage{tcolorbox}
\usepackage{enumerate}
\usepackage[shortlabels]{enumitem}
\allowdisplaybreaks[1]

\apptocmd{\sloppy}{\hbadness 10000\relax}{}{} 

\crefname{algocf}{Algorithm}{Algorithms}

\crefname{equation}{}{} 
\crefname{conjecture}{Conjecture}{Conjectures} 
\AtBeginEnvironment{appendices}{\crefalias{section}{appendix}} 

\crefformat{enumi}{#2#1#3}
\crefrangeformat{enumi}{#3#1#4 to~#5#2#6}
\crefmultiformat{enumi}{#2#1#3}%
{ and~#2#1#3}{, #2#1#3}{ and~#2#1#3}

\usepackage[final,color]{showkeys} 

\colorlet{refkey}{orange!20}
\colorlet{labelkey}{blue!30}

\crefname{algocf}{Algorithm}{Algorithms}

\numberwithin{equation}{section}
\newtheorem{theorem}{Theorem}[section]

\newtheorem{lemma}[theorem]{Lemma}

\crefname{claim}{Claim}{Claims}

\newtheorem*{question*}{Question}

\theoremstyle{definition}

\newtheorem*{definition*}{Definition}

\theoremstyle{remark}

\newcommand{\abs}[1]{\left\lvert#1\right\rvert}

\newcommand{\snorm}[1]{\lVert#1\rVert}

\newcommand{\one}{\mathbbm{1}}

\newcommand{\mb}{\mathbb}

\newcommand{\mc}{\mathcal}

\newcommand{\Z}{\mathbb{Z}}

\title{Short proofs in combinatorics and number theory}
\author{
Boris Alexeev, Moe Putterman, Mehtaab Sawhney, Mark Sellke, and Gregory Valiant\\
OpenAI
}

\email{\{balexeev,mputt,msawhney,msellke,valiant\}@openai.com}
\date{}

\begin{document}
\begin{abstract}
We give a triplet of short proofs, each of which answers a question raised by Erd\H{o}s. The first concerns the small prime factors of $\binom{n}{k}$, the second concerns whether an additive basis $A$ can always be split into pieces $A_1$ and $A_2$ such that each of $A_i + A_i$ has bounded gaps, and the final concerns whether $\{\alpha p\}$ is ``well-distributed'' in the sense introduced by Hlawka and Petersen. In each case, the proof is due entirely to an internal model at OpenAI.
\end{abstract}
\maketitle
\vspace{-2em}
\section{Introduction}
We study a triplet of questions raised by Erd\H{o}s. Due to the brevity of the proofs involved, and in the spirit of a sequence of papers by Alon \cite{Alon03,Alon08,Alon16} and by Conlon, Fox, and Sudakov \cite{CFS14,CFS16}, we have chosen to present them in a single manuscript. We now briefly survey the results discussed in each section.

Section 2 studies the small prime factors of the binomial coefficient $\binom{n}{k}$. In particular, let $u(n,k) = \prod_{p\le k}p^{v_p\big(\binom{n}{k}\big)}$ and let $f(n)$ be the minimal $k$ such that $u(n,k)>n^2$. We prove that $f(n)$ is at most polylogarithmic in size and that $f(n)$ is infinitely often logarithmic in size.

Section 3 studies the question of whether a basis $A$ can always be split into $A_1\sqcup A_2$ such that $A_1+A_1$ and $A_2+A_2$ both have bounded gaps. We answer this question in the negative via an explicit construction.

Finally, in Section 4 we consider whether $\{\alpha p\}$ is ``well-distributed'' in the sense of Hlawka \cite{Hla55} and Petersen \cite{Pet56}. We call a sequence $x_1, x_2,\ldots$ ``well-distributed'' if \[\lim_{k\to \infty}\sup_{\substack{n\ge 1\\I:=[a,b]\subseteq [0,1]}} \frac{|\#\{n<m\le n+k: x_m\in I\} - (b-a)\cdot k|}{k} = 0.\]
Answering a question of Erd\H{o}s, we note that Dirichlet's approximation theorem, together with the celebrated work of Maynard \cite{May15,May16} and Tao (in particular, via a corollary of Banks, Freiberg, and Turnage-Butterbaugh \cite{BFT15}), implies that $\{\alpha p_n\}$ is not well-distributed. The key point is that one can find long strings of consecutive primes for which $\{\alpha p_n\}$ lies in a narrow interval. We also note that work of Shao and Ter\"{a}v\"{a}inen \cite{ShaoTer21} very likely provides an alternative route: they prove the slightly weaker statement that for any interval $I$ there are bounded gaps between strings of primes with $\{\alpha p\}\in I$ (see also the discussion in \cite[pg.~2]{BHHMW23} and work of Benatar \cite{Benatar13} which proves the analog of \cite{ShaoTer21} for Diophantine $\alpha$). 

\subsection*{Comment on the use of AI}
The proofs in this manuscript are due entirely to an internal model at OpenAI. The role of the human authors was simply to digest the proofs and modify the write-ups for clarity and elegance.

Additionally, after obtaining these proofs, the authors were motivated to see whether GPT 5.4 Pro could recreate them; the upper bound in the first result can be obtained \href{https://chatgpt.com/share/69c9d0d0-13ac-832b-a995-16d9f6674954}{here}, and the third result can be obtained \href{https://chatgpt.com/share/69c9d16c-0e24-832f-8cce-4fe4f6b7bb2b}{here}.
(In each case, we performed fewer than $10$ identical attempts with the Pro model; each of these problems had one other similar solution, while our second result was not solved by the Pro model.)

\section{Thresholds for binomial coefficients}
In a problem paper, Erd\H{o}s \cite{Erd79} asked about the distribution of small prime divisors of $\binom{n}{k}$. Specifically, given $0\le k\le n$, define 
\[u(n,k) = \prod_{p\le k}p^{v_p\big(\binom{n}{k}\big)}.\]
Then define 
\[f(n) = \min\{0\le k\le n: u(n,k)>n^2\}.\]
Using recent work on gaps between primes of Guth and Maynard \cite{GM24}, Tang and ChatGPT (see \cite[Problem~\#684]{BloWeb}) have shown that $f(n)\le n^{30/43+o(1)}$, and the same argument gives $f(n)\le n^{2/3+o(1)}$ under the Riemann Hypothesis (or Density Hypothesis). Via a substantially more elementary argument, we give a polylogarithmic upper bound and, for an infinite sequence of $n$, a logarithmic lower bound.

Before diving into the proof, we briefly summarize the key details behind the upper bound. By Legendre's formula, if $k\pmod p > n\pmod p$ then $p|\binom{n}{k}$. As $k$ ranges over an interval of length $p$, the number of $k$ that satisfy this condition is $(p - 1 - n\pmod p)$; the adversarial case, therefore, is that for all $p$ of a given size $Y$, $n\pmod p$ is very close to $p$. This phenomenon occurring simultaneously for most $p$ is ruled out by noting that $n \pmod p\ge p - A$ implies that $p\mid(n+1)\cdots(n+A)$.

\begin{theorem}\label{thm:binomial-main}
For $n$ sufficiently large, we have that
\[
f(n)\le \left(\frac{24}{\pi^2-6}+o(1)\right)(\log n)^2
\leq 6.20219 (\log n)^2.
\]
Furthermore, there exists a sequence $n_j\to \infty$ such that
\[
f(n_j)\ge \left(\frac12+o(1)\right)\log n_j.
\]
\end{theorem}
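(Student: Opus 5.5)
The plan has two independent parts: the lower bound comes from an explicit choice of $n$ via Kummer's theorem, and the upper bound comes from averaging over $k$ and bounding how many primes can be adversarial.

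\textbf{Lower bound.} I would make $\binom{n}{k}$ coprime to every prime $p\le m$ for all $k\le m$. For each prime $p\le m$, let $e_p$ be maximal with $p^{e_p}\le m$. Set
\[
n_m+1=M:=\prod_{p\le m}p^{e_p+1}.
\]
Then for each $p\le m$ the lowest $e_p+1$ base-$p$ digits of $n_m$ all equal $p-1$. Any $k\le m$ satisfies $k<p^{e_p+1}$, so $k$ has nonzero digits only in these positions. Hence the subtraction $n_m-k$ in base $p$ has no borrows. By Kummer's theorem $v_p\binom{n_m}{k}=0$, so $u(n_m,k)=1$ for all $k\le m$, giving $f(n_m)>m$. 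Finally,
\[
\log M=\sum_{p\le m}(e_p+1)\log p=\psi(m)+\theta(m)=(2+o(1))m
\]
by the prime number theorem. This gives $f(n_m)\ge(\tfrac12+o(1))\log n_m$.

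\textbf{Upper bound.} Let $L=\log n$ and $K=cL^2$, with $c$ slightly larger than $24/(\pi^2-6)$. I would show that some $k\le K$ has $\log u(n,k)>2L$. By Legendre/Kummer, $p\mid\binom{n}{k}$ whenever $k\bmod p>n\bmod p$, so
\[
\log u(n,k)\ \ge \sum_{p\le k}\log p\cdot\one[k\bmod p>r_p],\qquad r_p:=n\bmod p .
\]
Write $r_p=p-m_p$ with $1\le m_p\le p$. As $k$ runs over a full period mod $p$, exactly $m_p-1$ residues satisfy the condition. Averaging over $k\in[1,K]$ (or against a suitable weight supported on $[1,K]$, so that the condition $p\le k$ costs only a factor like $(K-p)/K$), the mean of $\log u(n,k)$ is at least
\[
\sum_{p\le K}\log p\cdot\frac{m_p-1}{p}\cdot\frac{K-p}{K}\,(1+o(1)),
\]
up to boundary errors of size $O(\log K)$ per prime. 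It then suffices to show this exceeds $2L$.

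The adversary wants $m_p$ small for most primes. The constraint is that $m_p\le A$ implies $p\mid(n+1)\cdots(n+A)$, so for every $A$,
\[
\sum_{p:\,m_p\le A}\log p\le A\log(n+A)\approx AL .
\]
With $\theta(K)\sim K$, at most $\approx AL$ of the $\log p$-mass has $m_p\le A$. The remaining mass has $m_p>A$, and primes with $m_p=1$ contribute nothing. The worst case is a linear program: assign level $m$ to a mass of roughly $L$ at the primes for which the weight $\frac{K-p}{pK}$ is largest, until the total mass $\theta(K)$ is exhausted. Solving this extremal allocation, the contribution at level $m$ is governed by $(m-1)$ times the weight of a mass-$L$ layer. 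I expect the tail sums $\sum_{m\ge2}1/m^2=\pi^2/6-1$ to appear here, producing the threshold $K=(24/(\pi^2-6))L^2$.

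\textbf{Main obstacle.} The hard step is exactly this optimization: choosing the averaging weight on $k$ and solving the resulting rearrangement problem sharply, so that the constant comes out as $24/(\pi^2-6)$ rather than something cruder. I would first solve it with the weight $(K-p)/(pK)$ and a greedy allocation of levels to the largest-weight primes. If that gives a worse constant, I would replace uniform averaging by a triangular weight on $k$.
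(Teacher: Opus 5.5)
Your lower bound is correct and is exactly the paper's argument ($n=M-1$ with $M=\prod_{p\le m}p^{e_p+1}$, no borrows in base $p$, and $\log M=\psi(m)+\theta(m)=(2+o(1))m$). The upper bound, however, has a genuine gap at the averaging step you treat as routine. The main term with weight $\frac{K-p}{pK}$ is not a valid lower bound, and the discrepancy is not a negligible boundary error. After removing the $\lfloor K/p\rfloor-1$ complete periods from $[p,K]$, the leftover block $[\lfloor K/p\rfloor p,\,K]$ carries the residues $0,1,\dots,K\bmod p$, the smallest ones. The residues with $k\bmod p>r_p$ are instead the $m_p-1$ largest ones. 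So in exactly the adversarial regime ($m_p$ small), incomplete periods contribute nothing. For example, a prime $p\in(K/2,K]$ with $m_p\le 2p-K$ contributes $0$, while your main term credits it with about $\frac{(m_p-1)(K-p)}{pK}\log p$. For the extremal profile $m_p\approx p/L$, the total overcount is $\frac1K\sum_{p\le K}(m_p-1)\{K/p\}\log p\approx(1-\frac{\pi^2}{12})\frac KL$, the same order as the main term. Your own plan shows the symptom. The greedy allocation against the weight $\frac1p-\frac1K$ gives a mean of about $\frac1L\int_0^K(1-\frac pK)\,dp=\frac{K}{2L}$, i.e.\ a threshold $K=4L^2$. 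That is strictly better than the theorem, and no $\sum_m m^{-2}$ appears anywhere. It reflects the wrong weight, not a better constant. Switching to a triangular weight on $k$ does not address the real issue, which is the bookkeeping of which residues each block of $k$ supplies.

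The missing step is to keep only complete periods,
\[
\sum_{k\le K}\log u(n,k)\ \ge\ \sum_{p\le K}\Bigl(\Bigl\lfloor \frac Kp\Bigr\rfloor-1\Bigr)(m_p-1)\log p ,
\]
and then write $\lfloor K/p\rfloor-1=\sum_{j\ge2}\one_{p\le K/j}$. This splits your weighted rearrangement problem into a sum of unweighted ones. Fix $j$ and set $M_j:=\lfloor K/(jL)\rfloor$. Combine the layer-cake identity $\sum_{p\le K/j}m_p\log p=\sum_{A\ge0}\sum_{p\le K/j,\ m_p>A}\log p$ with your constraint for $A<M_j$ (where $\log(n+A)=L+o(1)$). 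This gives
\[
\sum_{p\le K/j}m_p\log p\ \ge\ M_j\,\theta(K/j)-\sum_{A<M_j}A\,(L+o(1))\ =\ \Bigl(\frac{1}{2}+o(1)\Bigr)\frac{K^2}{j^2L}.
\]
Sum over $2\le j\le J$; the subtracted $\theta(K/j)$ terms are lower order. Dividing by $K$, the mean of $\log u(n,k)$ is at least $(\frac{1}{2}\sum_{j=2}^{J}j^{-2}-o(1))\frac KL$. This exceeds $2L$ once $K=cL^2$ with $c>\frac{24}{\pi^2-6}$ and $J$ is large. So $\frac{\pi^2}{6}-1$ comes from the number $j$ of complete periods of $p$ inside $[1,K]$, not from the levels $m$ as you anticipated. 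This decomposition is the content of the paper's sets $\mathcal{P}_j=\{p\le Y/j\}$. Without it, your proposal does not yet prove the upper bound with any constant.
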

\begin{proof}
By Legendre's formula,
\[
v_p\bigg(\binom{n}{k}\bigg)
= \sum_{t=1}^{\infty}\bigg\lfloor \frac{n}{p^{t}}\bigg\rfloor-\bigg\lfloor \frac{n-k}{p^{t}}\bigg\rfloor-\bigg\lfloor \frac{k}{p^{t}}\bigg\rfloor
\ge \bigg\lfloor \frac{n}{p}\bigg\rfloor-\bigg\lfloor \frac{n-k}{p}\bigg\rfloor-\bigg\lfloor \frac{k}{p}\bigg\rfloor
= \one_{n\pmod p< k\pmod p}.
\]

Fix $\varepsilon>0$, and set
\[
C=\frac{24}{\pi^2-6}+\varepsilon,\qquad Y=\lfloor C(\log n)^2\rfloor.
\]
For each integer $j\ge 2$, let
\[
\mc{P}_j=\{p\text{ prime and }p\le Y/j\}.
\]
Define $r_p\equiv n\pmod p\in [0,p)$, $a_p=p-r_p$, and
\[
T_j=\sum_{p\in \mc{P}_j}\log p,\qquad
R_j=\sum_{p\in \mc{P}_j}a_p\log p,\qquad
M_j=\bigg\lfloor \frac{Y}{j\log n}\bigg\rfloor.
\]

The crucial observation is that for $1\le A\le M_j$ we have
\[
\sum_{\substack{p\in \mc{P}_j\\a_p\le A}}\log p
\le \log\Big(\prod_{m=1}^{A}(n+m)\Big)
\le A\log(n+A)
\le A(\log n + \log M_j);
\]
indeed, if $a_p\le A$ then $p$ divides one of $n+1,\ldots,n+A$. Now note that
\begin{align*}
R_j
= \sum_{A\ge 0}\Big(\sum_{p\in \mc{P}_j}\log p\cdot (1-\one_{a_p\le A})\Big)
&\ge M_jT_j-\sum_{A=0}^{M_j-1}A(\log n + \log M_j)\\
&\ge M_jT_j-\frac{M_j^2(\log n + \log M_j)}{2}\\
&= \left(\frac{C^2}{2j^2}-o(1)\right)(\log n)^3.
\end{align*}
Here we have used that $T_j=(1+o(1))Y/j$ by the prime number theorem.

Now fix $J\ge 2$. For each prime $p\le Y$, the interval $[p,Y]$ contains exactly $\lfloor Y/p\rfloor-1$ disjoint blocks of the form
\[
[mp,(m+1)p-1]\qquad (1\le m\le \lfloor Y/p\rfloor-1),
\]
and on each such block the residue classes modulo $p$ run through all of $[0,p)$ once. Hence
\begin{align*}
\sum_{k=1}^{Y}\log(u(n,k))
&\ge \sum_{k=1}^{Y}\sum_{p\le k}\log(p)\cdot \one_{n\pmod p< k\pmod p}\\
&\ge \sum_{p\le Y}\Big(\Big\lfloor \frac{Y}{p}\Big\rfloor-1\Big)(a_p-1)\log p\\
&\ge \sum_{j=2}^{J}\sum_{p\in \mc{P}_j}(a_p-1)\log p\\
&= \sum_{j=2}^{J}(R_j-T_j)\\
&\ge \left(\frac{C^2}{2}\sum_{j=2}^{J}\frac1{j^2}-o(1)\right)(\log n)^3.
\end{align*}
Therefore
\[
\frac{\sum_{k=1}^{Y}\log(u(n,k))}{Y}
\ge \left(\frac{C}{2}\sum_{j=2}^{J}\frac1{j^2}-o(1)\right)\log n.
\]
Since
\[
\sum_{j=2}^{\infty}\frac1{j^2}=\frac{\pi^2}{6}-1
\]
and
\[
C>\frac{4}{\sum_{j=2}^{\infty}j^{-2}}=\frac{24}{\pi^2-6},
\]
we may choose $J$ so that
\[
\frac{C}{2}\sum_{j=2}^{J}\frac1{j^2}>2.
\]
Thus for $n$ sufficiently large at least one $1\le k\le Y$ satisfies $u(n,k)>n^2$, proving the upper bound.


For the lower bound, for $K\ge 2$ define 
\[M_K = \prod_{p\le K}p^{\lfloor \log_p K\rfloor + 1}.\]
We prove that $f(M_K - 1)>K$. Observe that for every $0\le k\le K$ and every prime power $p^a$, we have
\[M_K - 1 \pmod {p^a} \ge k\pmod{p^a}.\]
For $a\le \lfloor \log_p K\rfloor + 1$ this is immediate, since $M_K - 1 \equiv p^{a}-1 \pmod {p^a}$. For larger $a$, we have $M_K - 1 \pmod {p^a}\ge p^{\lfloor \log_p K\rfloor + 1} - 1>K\ge k \pmod{p^a}$, as desired.

Via \[v_p\bigg(\binom{n}{k}\bigg) = \sum_{t=1}^{\infty}\bigg\lfloor \frac{n}{p^{t}}\bigg\rfloor-\bigg\lfloor \frac{n-k}{p^{t}}\bigg\rfloor-\bigg\lfloor \frac{k}{p^{t}}\bigg\rfloor = \sum_{t=1}^{\infty}\one_{n\pmod{p^t}<k\pmod{p^t}},\]
we have that $v_p(\binom{M_K-1}{k}) = 0$ for $p\le K$ and $k\le K$. Noting that $\log(M_K) = \sum_{p\le K}(\lfloor \log_p K\rfloor + 1)\log p = 2K + o(K)$ via the prime number theorem, we immediately obtain the desired result.
\end{proof}

\section{Basis of order two with no syndetic splits}

Burr and Erd\H{o}s (see Erd\H{o}s \cite{Erd94} and \cite[Problem~\#741]{BloWeb}) asked whether there exists a basis $A$ of order $2$ such that for every partition $A=A_1\sqcup A_2$, the two sumsets $A_1+A_1$ and $A_2+A_2$ cannot both have bounded gaps (equivalently, cannot both be syndetic). Erd\H{o}s wrote that he believed he had such a construction, but could not complete the proof. We provide an explicit construction here.

\begin{theorem}\label{thm:basis-main}
There exists a set $A\subset \mb{N}$ such that $A$ is a basis of order $2$, and for every partition $A=A_1\sqcup A_2$, at least one of $A_1+A_1$ and $A_2+A_2$ does not have bounded gaps.
\end{theorem}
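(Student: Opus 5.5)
The plan is to build $A$ so that, for infinitely many scales $k$, there is an element $b_k\in A$ and a length $m_k\to\infty$ such that every $n\in[b_k,b_k+m_k)$ has \emph{exactly one} representation $n=x+y$ with $x\le y$ and $x,y\in A$, namely $n=v+b_k$ with $v=n-b_k\in[0,m_k)$. In particular $0,1,\dots,m_k-1\in A$.

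Given such $A$, the partition statement follows by a short coloring argument. Let $A=A_1\sqcup A_2$, and for each $k$ let $c(k)\in\{1,2\}$ be the class containing $b_k$. For $n\in[b_k,b_k+m_k)$, the only way to have $n\in A_i+A_i$ uses the pair $(n-b_k,b_k)$, so it needs $b_k\in A_i$. Hence $A_{3-c(k)}+A_{3-c(k)}$ misses the whole interval $[b_k,b_k+m_k)$. By pigeonhole some $i$ equals $3-c(k)$ for infinitely many $k$. Then $A_i+A_i$ has gaps of length at least $m_k\to\infty$, so it is not syndetic.

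The substance is therefore the construction, and it has two competing requirements: $A$ must be a basis of order $2$, yet have these uniqueness windows. I would take a rapidly growing sequence $b_1<b_2<\cdots$, for instance with $b_{k+1}>2^{b_k}$, and $m_k$ growing slowly, say $m_k=k$.
\begin{itemize}
\item Let $A$ contain an \emph{initial segment} $[0,m_k)$ at each scale. Since $m_k$ grows slowly relative to $b_k$, these segments are negligible.
\item Let $A$ contain the points $b_k$.
\item Fill the rest of $A$ with a thin basis $B$ of order $2$, e.g.\ numbers whose binary digits are supported on even positions together with those supported on odd positions. Then delete from $B$ everything lying in \emph{forbidden zones} near each scale $k$: namely $(b_k-m_k,b_k)\cup(b_k,b_k+m_k)$, together with a neighbourhood of $b_k/2$ wide enough that no two elements of $A$ below $b_k$ can sum into $[b_k,b_k+m_k)$.
\end{itemize}

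Uniqueness in the window is then checked by cases on the larger summand $y$ of a representation $n=x+y$.
\begin{itemize}
\item If $y>b_k$, then $y\ge b_k+m_k>n$, which is impossible.
\item If $y=b_k$, we get the intended representation.
\item If $y<b_k$, then $y\le b_k-m_k$, so $x\ge m_k$. Both $x$ and $y$ are then elements of $B$ that avoid the deleted zone near $b_k/2$, and the zone is chosen so that such sums cannot land in the window.
\end{itemize}

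The main obstacle is restoring the basis property after these deletions. Numbers near $b_k$, and numbers whose natural $B$-representation used a deleted element near $b_k/2$, must still be written as a sum of two elements of $A$.
\begin{itemize}
\item For $n\in[b_k,b_k+m_k)$ we have $n=b_k+v$ already.
\item For $n$ just below $b_k$, or otherwise affected, I would add a few \emph{patch} elements located far from $b_k/2$, for example a short arithmetic progression just below $b_k-m_k$ paired with the initial segment. This gives representations $n=(\text{patch})+(\text{small})$.
\item I would then recheck that these patches create no new representations inside the window. This amounts to keeping every patch element plus every small element strictly below $b_k$.
\end{itemize}
The rapid growth of $b_k$ should make the scales essentially independent. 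The remaining work is careful bookkeeping: every $n$ must be represented, and no unwanted representation may enter any window.
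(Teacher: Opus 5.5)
\textbf{The construction of $A$ is self-contradictory as written; the colouring reduction itself is correct.} The reduction is exactly the paper's: if every representation of every element of a window of length $m_k\to\infty$ must use one designated element ($b_k$ for you, $c_k=4\cdot 5^{k-1}$ in the paper's rigidity lemma), then the colour class not containing that element misses the whole window, and pigeonhole over $k$ finishes.

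The problem is the initial segments. You require $0,1,\dots,m_k-1\in A$ for every $k$, with $m_k=k\to\infty$. These segments are nested, so their union is all of $\{0,1,2,\dots\}$, and $A$ then contains every nonnegative integer. Consequently no window has a unique representation (e.g.\ $b_k+1=1+b_k=2+(b_k-1)$). Likewise, deleting the zone $(b_k-m_k,b_k)$ is incompatible with $[0,m_j)\subseteq A$ as soon as $m_j\ge b_k$. Your remark that the segments are negligible because $m_k$ grows slowly compared to $b_k$ does not help: the mere fact that $m_k\to\infty$, not its rate, forces this. The $m_k$ partners of the designated element must lie in a block whose position moves with the scale. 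In the paper the window is $J_k=c_k+B_k$ with $B_k=[5\cdot 5^{k-1},6\cdot 5^{k-1}-1]$.

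\textbf{The uniqueness and basis checks are asserted rather than carried out, and they are the real content of the theorem.} This remains true even after relocating the partners.
\begin{itemize}
\item Deleting a neighbourhood of $b_k/2$ only excludes pairs with $x\approx y$. For your binary basis, the canonical representation $n=e(n)+o(n)$ of a window element has smaller summand below $2n/5$. So whether that representation is destroyed depends on the binary structure of $b_k$, which you never constrain.
\item The patches just below $b_k-m_k$ are checked only against small elements. They are not checked against elements of $A$ of size roughly between $m_k$ and $2m_k$, and such sums can land in the window.
\end{itemize}
What is needed is a global arrangement in which everything below the window sums away from it, while still covering all other integers.

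The paper does this with a self-similar ratio-$5$ structure. With $Q=5^{k-1}$, all earlier elements lie in $[2,3Q]$. Then come $c_k=4Q$, $B_k=[5Q,6Q-1]$, a gap, and $F_k=[10Q-1,15Q]$, and later stages begin at $20Q$. A direct check shows that the only sums meeting $[9Q,10Q-1]$ are $c_k+B_k$. The basis property follows by induction: $[2Q,3Q]\subseteq F_{k-1}$, together with $c_k$, $B_k$, $F_k$ and their pairwise sums, covers $[4Q,30Q]$.
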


The proof is via an explicit construction. We adopt the shorthand that for integers $x\le y$, we write $[x,y]=\{n\in\Z:x\le n\le y\}$. Define
\[
A=[2,3]\cup \bigcup_{k\ge 1}\bigl(\{c_k\}\cup B_k\cup F_k\bigr),
\]
where
\[
c_k=4\cdot 5^{k-1},
\qquad
B_k=[5\cdot 5^{k-1},\,6\cdot 5^{k-1}-1],
\qquad
F_k=[10\cdot 5^{k-1}-1,\,15\cdot 5^{k-1}].
\]
For $k\ge 0$, set
\[
A_k=[2,3]\cup \bigcup_{i=1}^k\bigl(\{c_i\}\cup B_i\cup F_i\bigr).
\]
While this is not the same as the construction of Erd\H{o}s and Nathanson in their 1975 paper \cite{ErdNat75}, it bears a great deal of resemblance.

We first note that $A$ is a basis of order $2$.
\begin{lemma}\label{lem:basis-cover}
For every $k\ge 0$,
\[[4,\,6\cdot 5^k]\subset A_k+A_k.
\]
\end{lemma}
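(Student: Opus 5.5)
Let $N_k=5^k$, so the claim reads $[4,6N_k]\subset A_k+A_k$. We argue by induction on $k$.

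\emph{Base case $k=0$.} We need $[4,6]\subset A_0+A_0$, and indeed $2+2=4$, $2+3=5$, $3+3=6$.

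\emph{Inductive step.} Assume $[4,6N_{k-1}]\subset A_{k-1}+A_{k-1}$. It suffices to cover $[6N_{k-1}+1,\,6N_k]=[6N_{k-1}+1,\,30N_{k-1}]$ using $A_k=A_{k-1}\cup\{c_k\}\cup B_k\cup F_k$. Write $m=N_{k-1}$, so that $c_k=4m$, $B_k=[5m,6m-1]$ and $F_k=[10m-1,15m]$. The available sumsets are:
\begin{itemize}
\item $c_k+[4\text{-ish part of }A_{k-1}]$;
\item $c_k+c_k=8m$;
\item $c_k+B_k=[9m,10m-1]$;
\item $B_k+B_k=[10m,12m-2]$;
\item $c_k+F_k=[14m-1,19m]$;
\item $B_k+F_k=[15m-1,21m-1]$;
\item $F_k+F_k=[20m-2,30m]$.
\end{itemize}
Together these cover $[14m-1,30m]$ and $[9m,12m-2]$. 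The gaps still to fill are $[6m+1,9m-1]$ and $[12m-1,14m-2]$. For these we use sums with an element $a\in A_{k-1}$. Since $A_{k-1}\supseteq[2,3]$, and by induction $A_{k-1}$ contains $F_{k-1}=[2m-1,3m]$ (for $k\ge2$), $B_{k-1}=[m,\tfrac65 m-1]$ and $c_{k-1}=\tfrac45 m$, we get:
\begin{itemize}
\item $F_k+F_{k-1}=[12m-2,18m]$, which covers $[12m-1,14m-2]$;
\item $B_k+F_{k-1}=[7m-1,9m-1]$;
\item $c_k+F_{k-1}=[6m-1,7m]$.
\end{itemize}
Together with $c_k+B_k$ these cover $[6m+1,10m-1]$. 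So the whole range $[6m+1,30m]$ is covered.

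The case $k=1$ ($m=1$) must be checked by hand, since $F_0$ does not exist. Here $A_1=\{2,3,4,5,9,10,11,12,13,14,15\}$, and we must verify $[7,30]\subset A_1+A_1$:
\begin{itemize}
\item $[7,10]$: $7=2+5$, $8=4+4$, $9=4+5$, $10=5+5$.
\item $[11,18]$: use $2,3$ plus elements of $[9,15]$.
\item $[18,30]$: $F_1+F_1=[18,30]$.
\end{itemize}
So $k=1$ holds.

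The main obstacle is the gap bookkeeping. We must confirm that the unions above are contiguous, with endpoints matching at $9m-1/9m$, $12m-2/12m-1$ and $14m-2/14m-1$. We must also confirm that the elements of $A_{k-1}$ used, namely $F_{k-1}$, lie in $A_{k-1}$, which they do for $k\ge2$. I expect these endpoint checks to all succeed given the deliberate $-1$ shifts in the definitions of $B_k$ and $F_k$.

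Finally, since $6N_k\to\infty$, every $n\ge4$ lies in some $A_k+A_k\subseteq A+A$, so $A$ is a basis of order $2$.
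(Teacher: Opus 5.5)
Your proof is correct and is essentially the paper's induction. The paper fills the gaps using sums with $I=[2Q,3Q]\subseteq F_{k-1}$ (namely $I+\{c_k\}$, $I+B_k$, $I+F_k$), which are exactly your $F_{k-1}$ sums, and the endpoint checks you flag do all go through; the only difference is that the paper notes $I=[2,3]$ when $k=1$, so the stage $[2,3]$ plays the role of $F_0$ and your separate hand check for $k=1$ is unnecessary.
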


\begin{proof}
We prove the result via induction on $k$. For $k=0$, note that $[2,3]+[2,3]=[4,6]$.

Assume $k\ge 1$ and the claim holds for $k-1$. Put $Q=5^{k-1}$. First note that
\[
I:=[2Q,3Q]_{\Z}\subset A_k,
\]
since $I=[2,3]$ for $k=1$ and $I\subset F_{k-1}$ for $k\ge 2$.
Thus the following intervals are all contained in $A_k+A_k$:
\begin{align*}
I+I&=[4Q,6Q],&
I+\{c_k\}&=[6Q,7Q],&
I+B_k&=[7Q,9Q-1],\\
\{c_k\}+B_k&=[9Q,10Q-1],&
B_k+B_k&=[10Q,12Q-2],&
I+F_k&=[12Q-1,18Q],\\
B_k+F_k&=[15Q-1,21Q-1],&
F_k+F_k&=[20Q-2,30Q].
\end{align*}
These intervals overlap consecutively, so their union contains $[4Q,30Q]$. By the inductive hypothesis, $[4,6Q]\subseteq A_k+A_k$, and since $4Q\le 6Q$, we get
\[
[4,30Q]\subseteq A_k+A_k. \qedhere
\]
\end{proof}

\cref{lem:basis-cover} implies that $A$ is an additive basis of order $2$. The crucial feature is to show that elements of the form $[9 \cdot 5^{k-1},10 \cdot 5^{k-1}-1]$ arise in a rigid manner.

\begin{lemma}\label{lem:rigid}
For each $k\ge 1$, define
\[
J_k=[9\cdot 5^{k-1},\,10\cdot 5^{k-1}-1].
\]
If $n\in J_k$ and $n=a+b$ with $a,b\in A$, then one of $a,b$ is $c_k$ and the other lies in $B_k$.
\end{lemma}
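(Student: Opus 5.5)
The plan is a case analysis on the larger summand. Set $Q=5^{k-1}$, so $9Q\le n\le 10Q-1$. Write $n=a+b$ with $a\le b$, so $b\ge n/2\ge 4.5Q$ and $b\le n-2<10Q$, since $\min A=2$. First we locate $b$ by listing the elements of $A$ in $[4.5Q,10Q)$:
\begin{itemize}
\item the block $B_k=[5Q,6Q-1]$;
\item possibly $c_k=4Q$, which is below $4.5Q$ and so excluded;
\item $F_{k-1}=[2Q-1/5,\,3Q]$ in scaled form, i.e.\ $[10\cdot 5^{k-2}-1,15\cdot 5^{k-2}]=[2Q-1,3Q]$, which lies below $4.5Q$;
\item $F_k=[10Q-1,15Q]$, whose least element $10Q-1$ could equal $b$ only if $a\le 0$, which is impossible.
\end{itemize}
Also $c_{k+1}=20Q$ is too big. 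Since the gap $(6Q-1,10Q-1)$ contains no element of $A$ (from $3Q$ on, the next elements are $4Q,B_k,F_k$), it follows that $b\in B_k$. The case $k=1$ needs a direct check, with $F_0$ replaced by $[2,3]$: $n\in[9,9]$, $A\cap[5,9]=\{5\}$, so $b=5$ and $a=4=c_1$.

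Next we locate $a$. Since $b\in[5Q,6Q-1]$, we get $a=n-b\in[9Q-(6Q-1),\,10Q-1-5Q]=[3Q+1,5Q-1]$. The only element of $A$ in this range is $c_k=4Q$, because $F_{k-1}$ ends at $3Q$ and $B_k$ starts at $5Q$. Hence $a=c_k$. It also remains to confirm $a\le b$, which is consistent, and that the roles are symmetric, so the statement holds for the unordered pair.

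The main obstacle is only bookkeeping: verifying precisely that $A\cap(3Q,10Q-1)=\{4Q\}\cup B_k$. This requires checking that the blocks of index $\le k-1$ all lie in $[2,3Q]$, which holds because $\max F_{k-1}=15\cdot 5^{k-2}=3Q$, and that the index-$(k+1)$ and later blocks start at $c_{k+1}=20Q$. I would state this as the first step and then run the two-step localization above.
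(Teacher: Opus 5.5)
Your proof is correct and uses the same interval bookkeeping as the paper ($\max A_{<k}=3Q$ and $A\cap(3Q,20Q)=\{4Q\}\cup B_k\cup F_k$). You simply organize it by first pinning the larger summand into $B_k$ via $n/2\le b\le n-2$, rather than enumerating all pairs of stages. One harmless slip: for $k=1$ you actually have $A\cap[5,9]=\{5,9\}$, since $9\in F_1$, but $b\le n-2=7$ excludes $9$. In fact no separate $k=1$ check is needed if $[2,3]$ plays the role of $F_0$.
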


\begin{proof}
Fix $k\ge 1$ and put $Q=5^{k-1}$. Let $A_{<k}$ be the union of $[2,3]$ and the stages $1,\dots,k-1$. The largest element of $A_{<k}$ is $15\cdot 5^{k-2}=3Q$, while every element from a stage later than $k$ is at least $c_{k+1}=20Q$.

If both summands lie in $A_{<k}$, then their sum is at most $6Q$. If one summand lies in $A_{<k}$ and the other lies in $\{c_k\}$, $B_k$, or $F_k$, then the sum lies respectively below $9Q$, below $9Q$, or above $10Q-1$. Thus no representation of an element of $J_k$ uses a point of $A_{<k}$.

Among sums from stage $k$ itself, the only ones meeting $J_k=[9Q,10Q-1]$ are
\[
c_k+B_k=[9Q,10Q-1].
\]
Any sum involving a later stage is at least $20Q+2$. This exhausts all possibilities and proves the lemma.
\end{proof}

\begin{proof}[Proof of \cref{thm:basis-main}]
Now let $A=A_1\sqcup A_2$ be any partition. If $c_k\in A_1$, then \cref{lem:rigid} implies
\[
J_k\cap (A_2+A_2)=\varnothing,
\]
and if $c_k\in A_2$, then
\[
J_k\cap (A_1+A_1)=\varnothing.
\]
One of the two colors contains infinitely many of the points $c_k$. Since $\abs{J_k}=5^{k-1}\to\infty$, the opposite monochromatic sumset has arbitrarily long gaps as desired.
\end{proof}

\section{Fractional parts of $\{\alpha p\}$}
A sequence $x_1,x_2,\ldots$ is called well-distributed if
\[\lim_{k\to \infty}\sup_{\substack{n\ge 1\\I:=[a,b]\subseteq [0,1]}} \frac{|\#\{n<m\le n+k: x_m\in I\} - (b-a)\cdot k|}{k} = 0.\]

This notion was introduced by Hlawka \cite{Hla55} and Petersen \cite{Pet56}. Erd\H{o}s \cite{Erd64} proved that if $(n_k)$ is lacunary, then $(\{\alpha n_k\})$ fails to be well-distributed for almost all $\alpha$ and also claimed a proof that $(\{\alpha p_n\})$ fails to be well-distributed for at least one irrational $\alpha$; he later retracted this claim in \cite{Erdos85}. He additionally asked in \cite{Erd64,Erdos85} whether $(\{\alpha p_n\})$ is never well-distributed for any irrational $\alpha$ (see also \cite[Problem~\#997]{BloWeb}). More recently, Champagne, L\^e, Liu, and Wooley \cite{ChampLeLiuWoo24} proved that such an irrational $\alpha$ exists.

We prove the conjecture of \cite{Erd64}.
\begin{theorem}\label{thm:997}
For every real number $\alpha$, the sequence $(\{\alpha p_n\})_{n\ge 1}$ is not well-distributed.
\end{theorem}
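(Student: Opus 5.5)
The plan is the one sketched in the introduction: produce, for every $k$, a run of $k$ consecutive primes $p_{n+1},\dots,p_{n+k}$ whose values $\{\alpha p_m\}$ all lie in a short interval. Then no $k$ can give a uniformly small discrepancy, so the sequence is not well-distributed. We split into two cases.

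\emph{Rational $\alpha$.} Write $\alpha=a/q$. Then $\{\alpha p_n\}$ takes only finitely many values, namely $\{ar/q\}$ for residues $r$ modulo $q$. Choose a degenerate interval $I=[x,x]$ with $x$ not among these values. Every window then counts zero hits, while $(b-a)k=0$; this is not yet a contradiction. Instead take $I=[0,1/(2q)]$ with $x$ shifted, or more simply take $I$ to be a very short interval around a value that is actually attained. Every term landing in $I$ must equal that single value $v$. By Dirichlet's theorem on primes in progressions, the proportion of terms equal to $v$ is asymptotically $c>0$ (or the value is attained only finitely often). Taking $I$ of length $\delta<c$ gives a discrepancy of at least $c-\delta$ in long windows. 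The case $\alpha\in\mathbb{Z}$ is trivial, since every term is $0$.

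\emph{Irrational $\alpha$.} Fix $k$ and $\eta>0$, and let $D$ be a large admissibility parameter. We need the consequence of Maynard--Tao due to Banks, Freiberg and Turnage-Butterbaugh \cite{BFT15}: for any admissible set of linear forms $g_i x+h_i$ (or, in their formulation, primes in a fixed progression), there are infinitely many strings of $k$ consecutive primes. More precisely, for any modulus $q$ and any residue $a$ coprime to $q$, there are $k$ consecutive primes that are all $\equiv a\pmod q$. By Dirichlet's approximation theorem, choose $q$ with $\|q\alpha\|<\eta/q$; we need $q$ to be large but independent of the string. If $p\equiv 1\pmod q$, write $p=1+qt$. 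Then $\{\alpha p\}=\{\alpha+t q\alpha\}$, and since $t$ varies this is not controlled directly.

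So instead I would use the stronger form of \cite{BFT15}: the $k$ consecutive primes can be taken to lie in a set of bounded diameter $H=H(k)$, and moreover to be all $\equiv 1\pmod q$. The differences $p_{n+i}-p_{n+1}=qs_i$ then satisfy $0\le s_i\le H/q$. Hence
\[
\{\alpha p_{n+i}\}-\{\alpha p_{n+1}\}\equiv s_i q\alpha \pmod 1,\qquad \|s_iq\alpha\|\le (H/q)(\eta/q)\le H\eta .
\]
The obstacle is that $H$ depends on $q$ as well as $k$. I expect the key point to be ordering the quantifiers correctly. I would first obtain from Maynard's theorem the bound for primes in progressions modulo $q$: the $k$ primes lie within $O_k(q)$ of each other, which gives $s_i=O_k(1)$. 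Then $\|s_iq\alpha\|\le O_k(1)\|q\alpha\|\le O_k(1)/q$. Choosing $q$ large via Dirichlet (with $q$ large and $\|q\alpha\|<1/q$) makes all $k$ values lie in an interval of length $O_k(1/q)\to0$.

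Finally, fix $I$ of length $1/2$ disjoint from this tiny cluster, which is possible once the cluster has length less than $1/2$. The window of length $k$ then has $0$ hits against the expected $k/2$, so the supremum is at least $1/2$ for every $k$, and the $\lim$ is not $0$.
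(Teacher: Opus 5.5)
Your argument is correct and is essentially the paper's: Dirichlet approximation plus the Banks--Freiberg--Turnage-Butterbaugh consequence of Maynard--Tao, namely that $m$ consecutive primes in a fixed coprime class mod $q$ can be found within $C_m q$ of each other with $C_m$ independent of $q$, which gives $\|\alpha(p-p')\|\le C_m\|q\alpha\|$ (the paper instead takes $q\le Q$ with $|\alpha-a/q|\le 1/(qQ)$ and $Q=\lceil C_m/\delta\rceil$, which also covers rational $\alpha$ and makes your separate rational case unnecessary). One small slip at the end: a closed interval of length $1/2$ in $[0,1]$ avoiding the cluster need not exist (for instance if the cluster sits near $1/2$), but one of length $1/3$ always does once the cluster has width below $1/3$, and this still bounds the discrepancy below.
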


The main input in the proof is given by the celebrated work of Maynard \cite{May15, May16} and Tao (unpublished); the following is a corollary of \cite[Corollary~3]{BFT15}.
\begin{theorem}\label{thm:input}
Fix $m\ge 1$ and $(a,q)=1$. There exists $C_m\ge 1$ such that for infinitely many $r$,
\[
p_{r+1} \equiv p_{r+2} \equiv \cdots \equiv p_{r+m} \equiv a \pmod q
\]
and
\[
p_{r+m} - p_{r+1}\le qC_m.
\]
\end{theorem}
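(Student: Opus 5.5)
We plan to deduce the statement from \cite[Corollary~3]{BFT15}, but we also sketch how that corollary itself follows from the Maynard--Tao method, so that the dependence of the constant only on $m$ (and not on $q$ or $a$) is transparent. The approach is to run the Maynard--Tao sieve on a linear form adapted to the progression $a \pmod q$. We then add congruence conditions that kill every integer in the relevant window except the tuple members, which forces the primes we find to be \emph{consecutive}.

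\textbf{Step 1 (choice of tuple).} By Maynard's theorem there is $k=k(m)$ such that for every admissible $k$-tuple of linear forms, infinitely many integer shifts make at least $m$ of the forms simultaneously prime. Here a $k$-tuple is admissible if no prime divides the product of the forms for every value of the variable. Fix once and for all an admissible set $\{h_1<\dots<h_k\}$ depending only on $k$, hence only on $m$; for instance, take the $h_i$ to be the first $k$ primes exceeding $k$. Consider the forms $L_i(t)=qt+a+qh_i$ for $1\le i\le k$. Since $(a,q)=1$ and $\{h_i\}$ is admissible, this tuple is admissible.

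\textbf{Step 2 (forcing consecutiveness).} We want, for infinitely many $t$, that every integer in $[L_1(t),L_k(t)]$ other than the $L_i(t)$ is composite. Following the Banks--Freiberg--Turnage-Butterbaugh trick, we impose on $t$ a congruence condition modulo a product $W'$ of small primes not dividing $q$, chosen via the Chinese remainder theorem. The condition is arranged so that each integer $qt+a+j$ with $0\le j\le q(h_k-h_1)$ and $j\notin\{qh_i-qh_1\}$ is divisible by some small prime. The $L_i(t)$ themselves must remain coprime to $W'$. Only finitely many positions need covering, and there are $q(h_k-h_1)$ of them. We cover them greedily, assigning one fresh prime per position and picking its residue class to avoid the residues $-a-qh_i$. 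Some care is needed because positions $j\not\equiv 0\pmod q$ are automatically coprime to the tuple structure, but they still need a prime; this is the step we expect to be the main obstacle. The real difficulty is checking that Maynard's theorem still applies when $t$ is restricted to a single residue class modulo $W'$. This holds because the Maynard--Tao sieve is uniform over such restrictions: it already works inside a fixed class modulo $W=\prod_{p\le D_0}p$, so we enlarge $W$ to include $W'$.

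\textbf{Step 3 (conclusion).} For the infinitely many $t$ obtained, at least $m$ of the $L_i(t)$ are prime, and every other integer in $[L_1(t),L_k(t)]$ is composite. Hence the primes among the $L_i(t)$ form a block of consecutive primes, and any $m$ consecutive ones among them may be labelled $p_{r+1},\dots,p_{r+m}$. Each is $\equiv a\pmod q$, and
\[
p_{r+m}-p_{r+1}\le L_k(t)-L_1(t)=q(h_k-h_1).
\]
So we may take $C_m=\max(1,h_k-h_1)$, which depends only on $m$. Distinct $t$ give distinct $r$, so there are infinitely many such $r$. The degenerate cases $q\in\{1,2\}$ are covered by the same argument, or follow directly from the Maynard--Tao theorem on $m$ primes in bounded intervals.
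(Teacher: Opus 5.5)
Your proposal is correct and follows the paper's route: the paper simply cites \cite[Corollary~3]{BFT15}, and your argument (Maynard--Tao for the admissible forms $qt+a+qh_i$, plus a Chinese-remainder covering of the non-tuple positions in $[L_1(t),L_k(t)]$, yielding $C_m=\max(1,h_k-h_1)$ independent of $a$ and $q$) is the standard argument behind that corollary. The step you flagged as the main obstacle is routine. Give each non-tuple position $L_1(t)+j$ its own prime $p_j>q(h_k-h_1)$; then the forced class of $t$ modulo $p_j$ can never also make some $L_i(t)$ divisible by $p_j$, because $L_i(t)-(L_1(t)+j)=q(h_i-h_1)-j$ is nonzero and has absolute value less than $p_j$.
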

\begin{proof}[Proof of \cref{thm:997}]
Fix $\delta\in (0,1/2)$ and $m\ge 1$. By Dirichlet's approximation theorem, for all $Q\ge 1$ there exists $q\le Q$ and $a\in \mb{Z}$ such that $(a,q) = 1$ and
\[|\alpha - a/q|\le 1/(qQ).\]

Set $Q = \lceil \delta^{-1}C_m\rceil$. Consider $r$ such that $p_{r+1} \equiv p_{r+2} \equiv \cdots \equiv p_{r+m} \equiv a\pmod q$ with $p_{r+m} - p_{r+1}\le qC_m$. Observe that for $1\le j,j'\le m$, we have
\[\snorm{\alpha(p_{r+j} - p_{r+j'})}_{\mb{R}/\mb{Z}} = \snorm{(a/q + (\alpha - a/q))(p_{r+j} - p_{r+j'})}_{\mb{R}/\mb{Z}}\le \frac{qC_{m}}{qQ}\le \delta.\]

This gives $m$ consecutive primes $p_{r+1},\ldots,p_{r+m}$ such that $\{\alpha p_{r+j}\}_{1\le j\le m}$ lies in an interval of width $\delta$ for all $1\le j\le m$. This immediately gives the desired result. 
\end{proof}

\bibliographystyle{amsplain0}
\bibliography{main.bib}

\providecommand{\bysame}{\leavevmode\hbox to3em{\hrulefill}\thinspace}
\providecommand{\MR}{\relax\ifhmode\unskip\space\fi MR }
\providecommand{\MRhref}[2]{%
  \href{http://www.ams.org/mathscinet-getitem?mr=#1}{#2}
}
\providecommand{\href}[2]{#2}
\begin{thebibliography}{10}

\bibitem{Alon03}
Noga Alon, \emph{Problems and results in extremal combinatorics---{I}},
  Discrete Mathematics \textbf{273} (2003), 31--53.

\bibitem{Alon08}
Noga Alon, \emph{Problems and results in extremal combinatorics---{II}},
  Discrete Mathematics \textbf{308} (2008), 4460--4472.

\bibitem{Alon16}
Noga Alon, \emph{Problems and results in extremal combinatorics---{III}},
  Journal of Combinatorics \textbf{7} (2016), 319--337.

\bibitem{BHHMW23}
Sai~Sanjeev Balakrishnan, F\'elix Houde, Vahagn Hovhannisyan, Maryna Manskova,
  and Yiqing Wang, \emph{Arbitrarily long strings of consecutive primes in
  special sets}, arXiv 2311.18701.

\bibitem{BFT15}
William~D. Banks, Tristan Freiberg, and Caroline~L. Turnage-Butterbaugh,
  \emph{Consecutive primes in tuples}, Acta Arithmetica \textbf{167} (2015),
  261--266.

\bibitem{Benatar13}
Jacques Benatar, \emph{The existence of small prime gaps in subsets of the
  integers}, International Journal of Number Theory \textbf{11} (2015),
  801--833.

\bibitem{BloWeb}
T.~F. Bloom, \url{https://www.erdosproblems.com}.

\bibitem{ChampLeLiuWoo24}
J.~Champagne, T.~Le, Y.-R. Liu, and T.~D. Wooley, \emph{Well-distribution
  modulo one and the primes}, arXiv 2406.19491.

\bibitem{CFS14}
David Conlon, Jacob Fox, and Benny Sudakov, \emph{Short proofs of some extremal
  results}, Combinatorics, Probability and Computing \textbf{23} (2014), 8--28.

\bibitem{CFS16}
David Conlon, Jacob Fox, and Benny Sudakov, \emph{Short proofs of some extremal
  results {II}}, Journal of Combinatorial Theory, Series B \textbf{116} (2016),
  173--196.

\bibitem{Erd79}
Paul Erd\H{o}s, \emph{Some unconventional problems in number theory}, Acta
  Mathematica Academiae Scientiarum Hungaricae \textbf{33} (1979), 71--80.

\bibitem{Erd94}
Paul Erd\H{o}s, \emph{Some problems in number theory, combinatorics and
  combinatorial geometry}, Mathematica Pannonica \textbf{5} (1994), 261--269.

\bibitem{ErdNat75}
Paul Erd\H{o}s and Melvyn~B. Nathanson, \emph{Oscillations of bases for the
  natural numbers}, Proceedings of the American Mathematical Society
  \textbf{53} (1975), 253--258.

\bibitem{Erd64}
Paul Erd{\H{o}}s, \emph{Problems and results on diophantine approximations},
  Compositio Mathematica \textbf{16} (1964), 52--65.

\bibitem{Erdos85}
Paul Erd{\H{o}}s, \emph{Some problems and results in number theory}, Number
  Theory and Combinatorics, World Scientific, Singapore, 1985, Proceedings of
  the Japan 1984 Conference (Tokyo, Okayama, Kyoto, 1984), pp.~65--87.

\bibitem{GM24}
Larry Guth and James Maynard, \emph{New large value estimates for {D}irichlet
  polynomials}, Annals of Mathematics \textbf{199} (2024), 465--522.

\bibitem{Hla55}
Edmund Hlawka, \emph{Zur formalen theorie der gleichverteilung in kompakten
  gruppen}, Rendiconti del Circolo Matematico di Palermo \textbf{4} (1955),
  33--47.

\bibitem{May15}
James Maynard, \emph{Small gaps between primes}, Annals of Mathematics
  \textbf{181} (2015), 383--413.

\bibitem{May16}
James Maynard, \emph{Dense clusters of primes in subsets}, Compositio
  Mathematica \textbf{152} (2016), 1517--1554.

\bibitem{Pet56}
G.~M. Petersen, \emph{Almost convergence and uniformly distributed sequences},
  Quarterly Journal of Mathematics \textbf{7} (1956), 188--191.

\bibitem{ShaoTer21}
Xuancheng Shao and Joni Ter\"{a}v\"{a}inen, \emph{The {B}ombieri--{V}inogradov
  theorem for nilsequences}, Discrete Analysis \textbf{2021} (2021), Paper No.
  21, 55.

\end{thebibliography}

\end{document}